\newtheorem{thm}{Theorem}[]
\theoremstyle{remark}
\newtheorem{rem}[thm]{Remark}
\DeclareMathOperator{\homeo}{\mathsf{Homeo}}
\DeclareMathOperator{\stab}{\mathsf{Stab}}
\DeclareMathOperator{\orb}{\mathsf{Orb}}
\DeclareMathOperator{\PSL}{\mathsf{PSL}}
\DeclareMathOperator{\LO}{\mathsf{LO}}
\newcommand{\id}{\mathsf{id}}
\newcommand{\R}{\mathbb{R}}
\newcommand{\Z}{\mathbb Z}
\renewcommand{\setminus}{\smallsetminus}
\renewcommand{\emptyset}{\varnothing}
\title[The $\Sigma(2,3,7)$ homology sphere group and its space of left-orders]{On the action of the $\Sigma(2,3,7)$ homology sphere group on its space of left-orders}
\author{Kathryn Mann \and Michele Triestino}
\date{\today}
\keywords{Group actions on the real line, orderable groups}
\begin{document}
\begin{abstract}

We show that the action of the  group $\Gamma=\langle a,b,c\mid a^2=b^3=c^7=abc\rangle$ on its space of left-orders has exactly two minimal components.

	\smallskip
	
	{\noindent\footnotesize \textbf{MSC\textup{2020}:} Primary 37C85, 57M60,37B05. Secondary 37E05.}

\end{abstract}

\maketitle

A \emph{left-order} on a group $G$ is a total order $\prec$ on $G$ which is preserved by the left-multiplication of $G$ on itself. A left-order is uniquely determined by its cone of positive elements $P_\prec=\{g\in G:g\succ \id\}$, which forms a semi-group, and satisfies $G=P_\prec\sqcup P^{-1}_\prec\sqcup\{\id\}$. Conversely, any semi-group satisfying such properties is the positive cone for some left-order. Because of this, one can see the set $\LO(G)$ of left-orders on $G$ as the set of positive cones on $G$, so that it is naturally a closed subset of $2^G$. 
A fundamental system of neighborhoods at a given left-order $\prec$ is given by the subsets
\[V_{\prec,F}=\{\prec'\in \LO(G):P_\prec\cap F=P_{\prec'}\cap F\},\]
where $F$ runs over finite subsets of $G$.

The conjugacy action of $G$ on itself induces an action by homeomorphisms on $\{0,1\}^G$ which preserves the subset of positive cones, and thus $G$ has a natural action by homeomorphisms on $\LO(G)$. Given an element $g\in G$ and a left-order $\prec\in \LO(G)$ we denote by $\prec_g$ the left-order obtained using this action. By definition, this is determined by the condition $P_{\prec_g}=gP_\prec g^{-1}$.

A general approach to understand the qualitative dynamical behavior of this action has been developed by Clay \cite{Clay}. Notably he proved (see also Rivas \cite{Rivas}) that in the case of nonabelian finite rank free groups, the action is topologically transitive, namely there exists a dense orbit.
When $G$ is a non-trivial group such that $\LO(G)$ is finite (such groups have been classified by Tararin, and they are a particular class of polycyclic groups), the action of $G$ on $\LO(G)$ has exactly two orbits. Using this, Clay gave an example of countable, not finitely generated, group for which the action is minimal, namely every orbit is dense. The problem of finding a non-trivial finitely generated group with this property is well-known to experts, see for instance Navas's survey \cite[Question 6]{NavasICM}.
One of the main difficulties is that there are often obstructions to make the orbit of a positive cone $P$ accumulate on $P^{-1}$ (see for instance \cite[Proposition 6]{Clay} for an obstruction related to convex generators for Conradian orderings).

The purpose of this note is to describe the action on the space of left-orders of the group 
\[
\Gamma:=\langle a,b,c\mid a^2=b^3=c^7=abc\rangle.
\]
By a classical result of Milnor \cite{Milnor}, this is isomorphic to the fundamental group of the Brieskorn homology 3-sphere $\Sigma(2,3,7)$, and it is also obtained by considering the $\Z$-central extension in $\widetilde{\PSL}(2,\R)$ of the cocompact Fuchsian $(2,3,7)$-triangle group $\Delta:=\Delta(2,3,7)$ in $\PSL(2,\R)$, as one sees from the standard presentation $\Delta\cong \langle a,b,c\mid a^2,b^3,c^7,abc\rangle$. (See \cite[Corollary 2.5]{Milnor}. Here we adopt a more common notation for these groups, as in Calegari \cite{Forcing}). The group $\Gamma$ is a quite remarkable left-orderable group, for instance Thurston \cite{Thurston}  noticed that it is an example of finitely generated, perfect left-orderable group.

We prove that the obstruction mentioned above is the only one for this group.
\begin{thm}\label{t.main}
	For any $\prec,\prec'\in \LO(\Gamma)$ such that $abc\in P_\prec\cap P_{\prec'}$, we have $\overline{\orb_\Gamma(\prec)}=\overline{\orb_\Gamma(\prec')}$. In particular, the action of $\Gamma$ on $\LO(\Gamma)$ has exactly two minimal components.
\end{thm}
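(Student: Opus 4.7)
One first checks that $z := abc$ is central in $\Gamma$: for each generator $g \in \{a, b, c\}$, the defining relations give $z = g^{k_g}$ with $k_g \in \{2, 3, 7\}$, so $gz = g^{k_g+1} = zg$. Consequently the sign of $z$ is a conjugation-invariant function on $\LO(\Gamma)$, and the clopen sets $\LO(\Gamma)^+ := \{\prec : z \succ \id\}$ and $\LO(\Gamma)^- := \{\prec : z \prec \id\}$ are $\Gamma$-invariant and exchanged by the involution $\prec \mapsto \prec^{\mathrm{op}}$. It therefore suffices to show that the $\Gamma$-action on $\LO(\Gamma)^+$ has a unique minimal component, i.e.\ that $\overline{\orb_\Gamma(\prec)} = \overline{\orb_\Gamma(\prec')}$ whenever $z \succ \id$ in both orders.

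The approach I would take goes through the dynamical realization. For each such $\prec$ it produces a faithful action $\phi_\prec : \Gamma \to \homeo_+(\R)$ with a basepoint $x_\prec$ satisfying $g \succ \id \iff \phi_\prec(g)(x_\prec) > x_\prec$. Since $z$ is central, positive, and commutes with every $\phi_\prec(g)$, the homeomorphism $\phi_\prec(z)$ is fixed-point-free and orientation-preserving; after reparametrizing one may assume $\phi_\prec(z)(t) = t+1$, so the action descends to a faithful action $\bar\phi_\prec : \Delta \to \homeo_+(\T)$ on $\T = \R/\Z$ (faithfulness comes from $\ker(\Gamma\to\Delta) = \langle z \rangle$ together with faithfulness of $\phi_\prec$). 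The crucial rigidity step is to show that every such $\bar\phi_\prec$ is monotonically semiconjugate to the standard Fuchsian embedding $\rho_F : \Delta \hookrightarrow \PSL(2,\R) \subset \homeo_+(\T)$. The images $\bar\phi_\prec(a), \bar\phi_\prec(b), \bar\phi_\prec(c)$ are nontrivial torsion of orders $2, 3, 7$, hence have rotation numbers in $\tfrac{1}{2}\Z$, $\tfrac{1}{3}\Z$, $\tfrac{1}{7}\Z$, and the requirement that $abc$ lift to the unit translation pins down a unique admissible triple of lifted rotation numbers. From here a Matsumoto-type argument — for instance passing to a torsion-free finite-index surface subgroup of $\Delta$ and using maximality of the Euler class — should produce a semiconjugacy $h_\prec : \R \to \R$ intertwining $\phi_\prec$ with the standard action $\phi_{\mathrm{std}}$ of $\Gamma \hookrightarrow \widetilde{\PSL}(2,\R)$ on $\R$.

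Given the semiconjugacies, comparison of orbit closures follows from minimality. For $\prec, \prec' \in \LO(\Gamma)^+$ set $y = h_\prec(x_\prec)$ and $y' = h_{\prec'}(x_{\prec'})$. The action $\phi_{\mathrm{std}}$ is minimal on $\R$ (being a lift of the minimal Fuchsian action of $\Delta$ on $\T$), so we can choose $g_n \in \Gamma$ with $\phi_{\mathrm{std}}(g_n)(y) \to y'$. For a prescribed finite $F \subset \Gamma$, I would argue that once $\phi_{\mathrm{std}}(g_n)(y)$ is close enough to $y'$ and avoids the finite set of fixed points of $\phi_{\mathrm{std}}(f)$ for $f \in F$, the conjugated order $\prec_{g_n}$ agrees with $\prec'$ on $F$. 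This yields $\prec' \in \overline{\orb_\Gamma(\prec)}$, and reversing roles gives the opposite inclusion.

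The two most delicate points, as I see them, are: (i) promoting rotation-number information into the full semiconjugacy with the Fuchsian action, which seems to be the genuine core of the theorem and where the specifics of $\Delta(2,3,7)$ must be exploited; and (ii) handling ``blown-up'' orders, i.e.\ those for which $h_\prec$ has a nondegenerate plateau through $x_\prec$: in this case the finite-set comparison argument requires a more careful match of the order data supported on the collapsed interval, which is likely to involve a separate approximation by ``non-blown-up'' orders.
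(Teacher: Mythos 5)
Your overall route coincides with the paper's: split $\LO(\Gamma)$ by the sign of the central element $abc$, pass to dynamical realizations, use the Milnor--Wood/Matsumoto rigidity of actions of $\Delta$ on the circle to get a monotone semi-conjugacy to the standard action, and then use minimality of the standard action to move one basepoint near the other. However, two steps are genuinely incomplete. The smaller one: your assertion that $\phi_\prec(abc)$ is fixed-point-free \emph{because} $abc$ is central and positive is not a proof. Centrality only makes $\fix(\phi_\prec(abc))$ a closed invariant set avoiding the basepoint, which is no contradiction --- in the dynamical realization of a lexicographic order on $\Z^2$ the central positive generator of the small convex subgroup has plenty of fixed points. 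What is actually needed is the torsion in the presentation: if $\varphi(abc)$ fixed $p$ and $\varphi$ had no global fixed point, some generator, say $a$, would move $p$, and then $\varphi(a^2)(p)=\varphi(a)(\varphi(a)(p))$ would lie strictly on one side of $p$, contradicting $a^2=abc$. This is easily repaired, but the repair is precisely where the relations of $\Gamma$ enter.

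The serious gap is your point (ii), which is not a loose end to be ``handled separately'' but the core difficulty the theorem is about (the paper's introduction calls it \emph{the} known obstruction to accumulation of orbits of positive cones). Your condition that $\phi_{\mathrm{std}}(g_n)(y)$ avoid $\fix(\phi_{\mathrm{std}}(f))$ for $f\in F$ correctly controls the sign of each $f$ in the conjugated order $\prec_{g_n}$; what it does not control is the sign of $f$ in the \emph{target} order $\prec'$ when $y'$ itself is fixed by some $f\in F$, i.e.\ when $h_{\prec'}$ has a plateau through $x_{\prec'}$ whose stabilizer meets $F$. In that case the signs of those elements are invisible from the position of $y'$, and ``close to $y'$'' is not enough: one must approach $y'$ from the correct \emph{side}. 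The missing ingredient is Remark \ref{r.hyperbolic}: such a stabilizer is infinite cyclic with a generator $k$ that contracts a neighborhood of the plateau, so points just to the left are moved right by $k$ and points just to the right are moved left; hence approaching from the left if $k\in P_{\prec'}$ and from the right if $k\in P_{\prec'}^{-1}$ matches the signs of \emph{all} powers of $k$ simultaneously, which is exactly how the paper closes the argument. Your proposed substitute, ``approximation by non-blown-up orders,'' only works once this side-selection mechanism is in place, so as written it begs the question; and note that the mechanism depends on stabilizers being cyclic and hyperbolic --- the feature whose failure for $\widetilde T$ is why the analogous statement is open there.
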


Let us comment that $\Gamma$ gives the first known example of finitely generated group with this property, and admitting uncountably many left-orders. The proof of this result resembles somehow the proof of \cite[Proposition 2.12]{RivasTessera}, and it relies on the tight relation between left-orders and orientation-preserving actions on the real line. Indeed, given any left-order $\prec$ on a countable group $G$, one can construct a faithful action of $G$ on the line, by the so-called \emph{dynamical realization}. More precisely, there exists an action $\varphi_\prec:G\to \homeo_+(\R)$ and a base point $p\in \R$ such that $\varphi_\prec(g)(p)>p$ if and only if $g\in P_\prec$. 

\begin{rem}\label{r.dyn_real}
	Note that for any $g\in G$, the action $\varphi_\prec$ with base point $\varphi_\prec(g)(p)$ provides a dynamical realization of the left-order $\prec_g$.  This follows easily from the fact that the action of $G$ on the line is by orientation preserving homeomorphisms, so $\varphi_\prec(h)(p) > p \Leftrightarrow \varphi_\prec(gh)(p) > \varphi_\prec(g)(p) \Leftrightarrow \varphi_\prec(ghg^{-1})\varphi_\prec(g)(p) > \varphi_\prec(g)(p)$.
\end{rem}

Being naturally a subgroup of $\widetilde{\PSL}(2,\R)$, the group $\Gamma$ admits a faithful action on the real line.
More precisely,  the action of $\Gamma$ is the $\Z$-central lift of the action of $\Delta\in \PSL(2,\R)$ by M\"obius transformations on $\R\mathrm{P}^1$ (or, equivalently, the action on the boundary of the hyperbolic disk $\partial \mathbb H^2$ induced by isometries of $\mathbb H^2$),  where the center is generated by the element $abc$.

\begin{rem}\label{r.hyperbolic}
	Thus, this action has the property that every point stabilizer is either trivial or cyclic (because $\Delta$ is a discrete subgroup of $\PSL(2,\R)$), and in the latter case one can find a generator of the stabilizer which contracts any sufficiently small neighborhood of the point (because $\Delta$ is cocompact in $\PSL(2,\R)$).
\end{rem}

We will refer to the action of $\Gamma$ coming from the embedding on $\widetilde{\PSL}(2,\R)$, as to the \emph{standard action} $\rho:\Gamma\to \homeo_+(\R)$. The remarkable (and well-known) fact that we will use is that the standard action is essentially the unique action of $\Gamma$  on the line.

\begin{thm}\label{t.unique}
	Let $\varphi:\Gamma\to \homeo_+(\R)$ be an action without global fixed points. Then there exists a continuos monotone map $h:\R\to \R$ such that $h\circ \varphi(g)=\rho(g)\circ h$ for every $g\in \Gamma$.
\end{thm}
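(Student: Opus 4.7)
The plan is to use the central element $z := abc$ to push $\varphi$ down to a circle action of the cocompact Fuchsian group $\Delta = \Gamma/\langle z\rangle$, and then invoke classical rigidity for circle actions of $\Delta$.

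\textbf{Step 1: $\varphi(z)$ has no fixed points.} From the presentation $z = a^2 = b^3 = c^7$. If $\varphi(z)(x) = x$, then $\varphi(a)^2(x) = x$; since $\varphi(a)$ is orientation-preserving, the inequality $\varphi(a)(x) > x$ would force $\varphi(a)^2(x) > \varphi(a)(x) > x$, a contradiction, and symmetrically for $\varphi(a)(x) < x$. Hence $\varphi(a)(x) = x$, and the identical argument applied to the relations $b^3 = z$ and $c^7 = z$ gives $\varphi(b)(x) = \varphi(c)(x) = x$. Since $\{a,b,c\}$ generates $\Gamma$, $x$ would be a global fixed point, contradicting the hypothesis. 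So $\fix(\varphi(z)) = \emptyset$ and $\varphi(z)$ is topologically conjugate to a nontrivial translation; after conjugating we may assume $\varphi(z)\colon x\mapsto x+1$, and likewise $\rho(z)\colon x\mapsto x+1$.

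\textbf{Steps 2--4: descent and rigidity on the circle.} The projection $\pi\colon \R \to S^1 := \R/\langle\varphi(z)\rangle$ is a topological covering. Since $z$ is central, $\varphi$ commutes with $\varphi(z)$ and descends to an orientation-preserving action $\bar\varphi\colon \Delta \to \homeo_+(S^1)$; similarly $\rho$ descends to $\bar\rho$. By construction, the central $\Z$-extension of $\Delta$ obtained by pulling back the universal cover $\R \to S^1$ along $\bar\varphi$ is isomorphic to the given extension $\langle z\rangle \hookrightarrow \Gamma \twoheadrightarrow \Delta$, and the same holds for $\bar\rho$. Therefore $\bar\varphi$ and $\bar\rho$ realise the same Euler class in $H^2(\Delta;\Z)$. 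Since $\Delta$ is a cocompact Fuchsian group (virtually a closed surface group), the Matsumoto--Ghys rigidity theorem for circle actions of prescribed Euler class --- applied, if needed, on a torsion-free finite-index subgroup and then promoted back by equivariance --- yields a continuous, weakly monotone, degree-one map $\bar h\colon S^1 \to S^1$ with $\bar h \circ \bar\varphi(g) = \bar\rho(g) \circ \bar h$ for every $g \in \Delta$.

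\textbf{Step 5 and main obstacle.} Lift $\bar h$ to a continuous non-decreasing map $h\colon \R \to \R$ satisfying $h(x+1) = h(x)+1$, i.e.\ $h \circ \varphi(z) = \rho(z) \circ h$. For each generator $g \in \{a,b,c\}$, the function $\rho(g)^{-1}\circ h \circ \varphi(g) - h$ is continuous, $\Z$-valued, and hence a constant $n_g$; one verifies using the relations $a^2 = b^3 = c^7 = z$ that the $n_g$ can be absorbed by shifting the lift $h$ by an integer, producing $h \circ \varphi(g) = \rho(g) \circ h$ on every generator, hence on all of $\Gamma$. The only substantive step is the rigidity invoked in Step 4: one must import (or reprove, directly from the hyperbolic dynamics of the triangle generators as in Remark~\ref{r.hyperbolic}) the statement that circle actions of $\Delta$ with the Euler class of $\bar\rho$ are semi-conjugate to $\bar\rho$. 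The remaining steps are standard dynamics of orientation-preserving homeomorphisms of the real line and elementary covering-space arguments.
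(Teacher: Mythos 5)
Your proposal follows essentially the same route as the paper: show the central element $abc$ acts freely on the line (same order-of-generators argument), descend to a nontrivial circle action of $\Delta$, invoke the Milnor--Wood/Matsumoto rigidity for circle actions of the cocompact triangle group, and lift the monotone degree-one semi-conjugacy back to $\R$; the paper compresses your Steps 2--5 into a citation of Calegari's Remark 4.3.4 plus the observation that minimality of the standard action lets one realize the semi-conjugacy by a monotone map. One small slip in your Step 5: replacing $h$ by $h+m$ does \emph{not} change the defects, since $\rho(g)^{-1}\circ(h+m)\circ\varphi(g)-(h+m)=\rho(g)^{-1}\circ h\circ\varphi(g)-h=n_g$ because $\rho(g)$ commutes with integer translations; instead note that $g\mapsto n_g$ is a homomorphism $\Gamma\to\Z$ and $\Gamma$ is perfect (as the paper recalls via Thurston), so all $n_g$ vanish with no adjustment needed.
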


\begin{proof}
	As explained in Calegari \cite[Remark 4.3.4]{Forcing}, by a combination of the Milnor--Wood inequality and a theorem of Matsumoto
	the group $\Delta$ admits a unique non-trivial action on the circle up to semi-conjugacy.  We claim that for any action $\varphi$ of $\Gamma$ on $\R$ as in Theorem \ref{t.unique}, the central element $abc$ acts without fixed points.  Thus, there is an induced action of $\Delta=\Gamma/\langle abc\rangle$ on the circle $\R/\langle \varphi(abc)\rangle$, which is nontrivial (else $abc$ would act trivially on the line), so unique up to semi-conjugacy.  Since the standard action is minimal, such a semi-conjugacy can be realized by a continuous monotone map (projected to the circle) as in the statement of the theorem, showing that $\Gamma$ has a unique action up to semi-conjugacy as desired.  
	
	To prove the claim, assume that $p\in \R$ is fixed by $\varphi(abc)$. Since we are assuming that $\varphi$ has no global fixed points, there must be a generator $s\in \{a,b,c\}$ such that $\varphi(s)(p)\neq p$. Assume, say, that $s=a$ and $\varphi(a)(p)>p$ (the other cases can be treated similarly). Then we have $\varphi(a^2)(p)>\varphi(a)(p)>p$, but $\varphi(a^2)(p)=\varphi(abc)(p)=p$, contradicting the assumption.
\end{proof}

The conclusion in the statement is often rephrased by saying that $\varphi$ and $\rho$ are \emph{semi-conjugate} actions. We will say that they are \emph{positively} semi-conjugate if the map $h$ is increasing. When $h$ is not injective, we also say that $\varphi$ is a \emph{blow up} of $\rho$.

\begin{rem}\label{r.blowup}
	In general, if $\psi:G\to \homeo_+(\R)$ is a blow up of a dynamical realization $\varphi_\prec$ with base point $p$, with positive semi-conjugacy $h:\R\to \R$, then for any $q \in h^{-1}(p)$, the linear order of points in the orbit $\psi(G)(q)$ agrees with that of $\varphi_\prec(G)(p)$.
\end{rem}

\begin{proof}[Proof of Theorem \ref{t.main}]
	
	Fix two left-orders $\prec$ and $\prec'$ on $\Gamma$ and a finite subset $F\subset \Gamma$. We write $P$ and $P'$ for the corresponding positive cones. Assume that $abc\in P\cap P'$. We want to find an element $g\in \Gamma$ such that $\prec_g'$ belongs to the neighborhood $V_{\prec,F}$.
	Let $\varphi:\Gamma\to \homeo_+(\R)$ and $\varphi':\Gamma\to \homeo_+(\R)$ be dynamical realizations of $\prec$ and $\prec'$ respectively, with corresponding base points $p$ and $p'$.
	As $abc\in P\cap P'$, by Theorem \ref{t.unique}, the dynamical realizations $\varphi$ and $\varphi'$ are positively semi-conjugate. As any two positively semi-conjugate actions admit a common blow up (see for instance \cite[Theorem 2.2]{KKMj}), by Remark \ref{r.blowup} we can actually assume $\varphi=\varphi'$, and we will simply write $g(x)$ instead of $\varphi(g)(x)$ for any $g\in \Gamma$, $x\in \R$. We denote by $\Lambda\subset \R$ the minimal invariant closed subset for the action $\varphi$.
	When $p$ or $p'$ is not in $\Lambda$, we let $J$ (respectively, $J'$) be the closure of the connected component of $\R\setminus \Lambda$ containing the base point $p$ (respectively, $p'$), and we write $K=\stab(J)$ (respectively, $K'=\stab(J')$). In case $p$ or $p'$ is in $\Lambda$, we simply put $J=\{p\}$ (or $J'=\{p'\}$), in which case $K$ (respectively, $K'$) is trivial. Note that as for the standard action $\rho$ point stabilizers are either trivial or cyclic (Remark \ref{r.hyperbolic}), the subgroup $K$ (and so $K'$) is either trivial or cyclic. 
	
	As an easy first case, assume to start that $K$ is trivial. By continuity of the action, we can find a neighborhood $V$ of $J$, such that $f(x)>x$ if and only if $f(p)>p$ for every $f\in F$ and $x\in V$. 
	%%%
	Let $g\in \Gamma$ be any element such that $g(p')\in V$, in this case, by Remark \ref{r.dyn_real}, we have $\prec_g'\in V_{\prec,F}$, as wanted.
	%%%
	
	When $K$ is non-trivial, we need to take some care to choose such an element $g$ to send $p'$ to the correct side of $J$.  
	By Remark \ref{r.hyperbolic}, we can take a generator $k$ of $K$ such that for every sufficiently small neighborhood $U$ of $J$, one has $k(U)\subset U$.  That is to say, if $u$ is close to $J$ but strictly to the left of $J$, we have $k(u) > u$, and if $u$ is close to $J$ but strictly to the right, then $k(u) < u$.  
	Suppose first that $k\in P$.  Then choose $g \in \Gamma$ such that $g(p')$ lies to the left of $J$ in $U - J$, chosen sufficiently close so that $f(x)>x$ if and only if $f(p)>p$ for any $f\in F$ (note that our choice of the left side ensures this holds even if $K \cap F \neq \emptyset$), 
	and such that $gK'g^{-1}\cap F=\emptyset$.  The last condition may easily be satisfied because each side of $U-J$ intersects the minimal set for the action.  Then, by Remark \ref{r.dyn_real} again, $\prec_g'\in V_{\prec,F}$, as desired.   If instead we have $k \in P^{-1}$, one makes the same argument choosing $g(p')$ to be to the right.  
\end{proof}

As a final comment, let us mention that other finitely generated groups are known to admit a unique nontrivial action on the line, up to semi-conjugacy. One interesting example is provided by the central extension $\widetilde T$ of Thompson's group $T$ of dyadic piecewise linear circle homeomorphisms (see the work of Matte Bon and the second author \cite{MatteBonTriestino}). Understanding this example was our original motivation, as we were wondering whether one could find examples of groups whose action on the space of orders is minimal by considering the groups of Hyde and Lodha \cite{HydeLodha} (or their more conceptual generalization presented by Le Boudec and Matte Bon in \cite{LBMB}), which are ``quasi-periodic'' versions of $\widetilde T$.
However, point stabilizers for the standard action of $\widetilde T$ are very large, as they are always isomorphic to a subgroup of Thompson's group $F$ containing $[F,F]$, and the classification of actions of such groups (even up to semi-conjugacy, see the work of Brum, Matte Bon, Rivas and the second author \cite{BMRT}) is quite complex. Because of this, the classification of actions of $\widetilde T$ \emph{up to conjugacy} is also complex, and there is little hope to understand the minimal components of the action of $\widetilde T$ on $\LO(\widetilde T)$ (and even worse in the case of the groups of Hyde and Lodha and generalizations).

{\small \subsection*{Acknowledgments}
M.T.\ thanks the warm hospitality of the Department of Mathematics at Cornell University. 
K.M. was partially supported by NSF grant DMS-1844516 and a Sloan Fellowship.
M.T. is partially supported by the project ANR
Gromeov (ANR-19-CE40-0007), the project ANER
Agroupes (AAP 2019 Région Bourgogne–Franche–Comté), and his host department IMB
receives support from the EIPHI Graduate School (ANR-17-EURE-0002).}

\bibliographystyle{plain}

\bibliography{biblio.bib}

\medskip

\noindent\textit{Kathryn Mann\\
	Department of Mathematics,
	Cornell University,
	Ithaca, NY 14853, USA\\}
\href{mailto:k.mann@cornell.edu}{k.mann@cornell.edu}

\medskip

\noindent\textit{Michele Triestino\\
	Institut de Math\'ematiques de Bourgogne (UMR CNRS 5584),
	Universit\'e de Bourgogne,
	9 av.~Alain Savary, 21000 Dijon, France\\}
\href{mailto:michele.triestino@u-bourgogne.fr}{michele.triestino@u-bourgogne.fr}
\end{document}